\tikzset{
	rot90/.style={anchor=south, rotate=90, inner sep=.5mm}
}
\tikzset{
	rot45/.style={anchor=south, rotate=-45, inner sep=.5mm}
}
\newtheorem{theorem}{Theorem}[section]
\newenvironment{customthm}[1]
{\innercustomthm}
{\endinnercustomthm}
\theoremstyle{definition}
\newtheorem{remark}[theorem]{Remark}
\title{Categorifying reduced rings}
\author{Ishan Levy\thanks{The author was supported by the NSF Graduate Research Fellowship under Grant No. 1745302.}}
\renewcommand\tableofcontents{%
	\@starttoc{toc}%
}
\newcommand{\ZCat}{\mathbb{Z}\mathrm{Cat}}
\newcounter{counter}
\newtheorem{thm}[counter]{Theorem}
\newtheorem{prop}[counter]{Proposition}
\newtheorem{qst}[counter]{Question}
\newtheorem{cnj}[counter]{Conjecture}
\newtheorem{dfn}[counter]{Definition}
\newtheorem{prb}[counter]{Problem}
\newtheorem{lem}[counter]{Lemma}
\newtheorem{rmk}[counter]{Remark}
\newcommand{\Addresses}{{
		\bigskip
		\footnotesize
}}
\begin{document}
	\date{}
	\maketitle
	\begin{abstract}
		Given a domain of characteristic zero $R$, we functorially construct a rigid symmetric monoidal dg-category whose $K_0$ is $R$, solving a problem of Khovanov. We also functorially construct, for any reduced commutative ring $R$, a rigid braided monoidal dg-category whose $K_0$ is $R$.
	\end{abstract}

	One prevalent insight in mathematics is that many classical invariants admit categorifications. Namely, instead of assigning a number or polynomial to an object $X$, one assigns an object of some stable category\footnote{Throughout this paper, we use category to mean $\infty$-category in the sense of Joyal and Lurie (see \cite{HTT}, \cite{HA} or \cite{kerodon}). If the reader prefers, they may read the phrase `stable category' as `dg-category'. In our language, a dg-category over $k$ is a $k$-linear stable category \cite{cohn2013differential}. All stable categories of interest in this paper are $k$-linear for some commutative ring $k$.}
	 $C$, which encodes richer information about $X$. The original invariant can then be recovered via applying an Euler characteristic, i.e a homomorphism out of $K_0(C)$\footnote{$K_0(C)$, i.e the Grothendieck group of $C$, is the abelian group generated by $[c]$ for each $c \in C$ with the relation $[c]=[c']+[c'']$ whenever there is a cofibre sequence $c' \to c \to c''$.}, to the $K_0$ class of the categorified invariant.
	
	A prototypical example of this is in algebraic topology, where the Euler characteristic of a space is categorified by its homology. Another example is in algebraic geometry, where the Hilbert polynomial is categorified by coherent cohomology. Finally, in low dimensional topology, classical knot invariants such as the Jones polynomial and Alexander polynomial are categorified by Khovanov homology \cite{khovanov2000categorification} and Knot Floer homology \cite{ozsvath2004holomorphic} respectively. 
	
	A natural question to ask is whether the type of invariant can obstruct categorifications from existing.
	For example, the Witten--Reshetikhin--Turaev invariants, constructed in \cite{witten1989quantum,reshetikhin1991invariants}, are invariants taking values in the complex numbers, and it is not known whether these always admit a categorification. 
	
	One can build stable categories whose $K_0$ is an arbitrary rational vector space, as is done in \cite{barwick2019categorifying}. The reason is that the multiplication by $n$ map on $K_0$ can be implemented by a functor: for example the functor sending $c \mapsto \oplus_1^nc$. By taking a filtered colimit along such a functor, since $K_0$ preserves filtered colimits, one obtains a category whose $K_0$ is that of $C$, but with $n$ inverted.
	
	The method above doesn't naturally produce monoidal categories, which is something usually desired for the target categories of categorifications of invariants, to allow for K\"unn\-eth-type formulas for the invariants one is categorifying. This is also a fundamental feature of categorifications of manifold invariants that come from field theories. Along these lines, Khovanov posed the following problems:
	
	\begin{prb}{\cite[Problem 2.3]{khovanov2016linearization}}\label{prob1}
		Construct a stable\footnote{By a monoidal stable category we mean an algebra object in the category of categories that is stable, and such that the tensor product preserves finite colimits in both variables.} monoidal category $C$ with $K_0\cong\QQ$.
	\end{prb}

	\begin{prb}\cite[Problem 2.4]{khovanov2016linearization}\label{prob2}
		Construct a stable monoidal category $C$ with $K_0\cong \ZZ[\frac 1 n]$.
	\end{prb}

	\cite{khovanov2019categorify} solved \Cref{prob2} in the case $n=2$, but the general case was previously open.
	
	We solve both problems below in \Cref{cor:subring}, even producing categories that are rigid\footnote{We recall that a monoidal category is rigid if every object is strongly left and right dualizable.} symmetric monoidal. The fundamental input is the ability to produce a category whose $K_0$ is a characteristic zero field, which we construct as an ultraproduct of the stable module categories of $\FF_p[C_p]$. We further refine this in \Cref{thm:functorial} below. 
	
 	Let $\Ring_{\red}^0$ be the category of commutative rings which are reduced, and such that every minimal prime ideal is characteristic zero\footnote{For example, this includes all domains of characteristic zero.}. Let $\ZCat^{\infty}_{\rig}$ denote the category of rigid symmetric monoidal $\ZZ$-linear stable categories.
	
	\begin{customthm}{A}\label{thm:functorial}
		There is a filtered colimit preserving functor $C^{\infty}_{(-)}:\Ring_{\red}^0 \to \ZCat^{\infty}_{\rig}$ and a natural isomorphism of rings $K_0(C^{\infty}_R) \cong R$.
	\end{customthm}

	To illustrate the purpose of \Cref{thm:functorial}, $C^{\infty}_{\overline{\QQ}}$ is a rigid symmetric monoidal $\ZZ$-linear category with a continuous action of the absolute Galois group of $\QQ$, such that $K_0(C^{\infty}_{\overline{\QQ}}) \cong \overline{\QQ}$ with its standard action.
	
	We next remove the characteristic zero assumption of \Cref{thm:functorial} in the setting of braided monoidal categories. The key input here is the category of tilting modules in the mixed case for Lusztig's quantum group $\msl_2$, which we semisimplify and tensor with the stable module category of $\FF_p[C_p]$ in order to categorify finite fields. Let $\Ring_{\red}$ be the category of reduced commutative rings, and $\ZCat^{2}_{\rig}$ the category of rigid braided monoidal $\ZZ$-linear stable categories.
	
	\begin{customthm}{B}\label{thm:functorialmon}
		There is a filtered colimit preserving functor $C^2_{(-)}:\Ring_{\red} \to \ZCat^2_{\rig}$ and a natural isomorphism of rings $K_0(C^2_R) \cong R$.
	\end{customthm}

	There is a natural condition on a category for which our constructions are essentially sharp.
	
	\begin{dfn}
		A braided monoidal category $C$ is \text{trace-zero} if for every nilpotent endomorphism $f:c \to c$, the trace of $f$ is zero. 
	\end{dfn}

	Our constructions of braided monoidal categories as described above gives the following result\footnote{Note that the dimension gives a ring homomorphism $K_0(C_R) \to \End(\unit_{C_R})$ to endomorphism ring of the unit. In particular, $\End(\unit_{C_R})$ is characteristic $p$ when $p=0$ in $R$.}:

	
	\begin{customthm}{C}\label{thm:trzero}
		For every reduced commutative ring $R$, there is a $\ZZ$-linear ribbon braided monoidal trace-zero stable category $C_R$ with an isomorphism of rings $K_0(C_R) \cong R$.
	\end{customthm}

	The following result is an obstruction to producing trace-zero categorifications of rings that are more symmetric than those in \Cref{thm:trzero}:

	\begin{thm}[{\cite[Theorem 5.15]{etingof2021lectures}}]\label{thm:obstruction}
			Let $C$ be a ribbon braided monoidal trace-zero stable\footnote{The cited result is for additive $1$-categories rather than stable categories, but the relevant part of the assumptions only depend the homotopy $1$-category of $C$ which is additive, so this is ok.} category with $[\unit_C,\unit_C]$ a characteristic $p$ ring. If $v\in \Aut(\unit_C)$ is the twist automorphism, suppose that $v^\ell$ is unipotent for some $\ell$ coprime to $p$. Let $n$ be the smallest integer such that $p^n-1$ is divisible by $\ell^2$. Then for any object $V \in C$, we have $\dim V \in \FF_{p^n}$. In particular, $K_0(C)$ cannot contain any field extension of $\FF_{p^n}$.
		\end{thm}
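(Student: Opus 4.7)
The statement is quoted from \cite[Theorem 5.15]{etingof2021lectures}, so the proof is essentially a matter of invoking their result after verifying the hypotheses transport. The footnote already makes this reduction: rigidity, the ribbon braided monoidal structure, trace-zero, and the characteristic-$p$ hypothesis on $\End(\unit_C)$ all pass to the additive homotopy $1$-category $\mathrm{h}C$, and the conclusion concerns dimensions of objects of $C$, which coincide with their dimensions computed in $\mathrm{h}C$. So my proof would just be the citation.

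\textbf{Sketch of the underlying idea.} The argument in the reference analyzes how the ribbon twist controls quantum dimensions. For each object $V$, write a Jordan decomposition $\theta_V = s_V + n_V$ in the additive endomorphism ring; the trace-zero hypothesis kills every contribution from the nilpotent part, giving $\mathrm{tr}(\theta_V^k) = \mathrm{tr}(s_V^k)$ for all $k$. The ribbon identity $\theta_{V \otimes W} = (\theta_V \otimes \theta_W) \circ c_{W,V} \circ c_{V,W}$ taken with $W = V^*$, combined with the cyclic property of the categorical trace, expresses $(\dim V)^2$ as a trace depending on the squared braiding, i.e.\ on $\theta^2$. The hypothesis that $v^\ell$ is unipotent then constrains the eigenvalues of each $s_V$ to be $\ell$-th roots of unity up to unipotent error, and the squaring in the previous step promotes these to $\ell^2$-th roots of unity in the formula for $\dim V$. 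Since $\End(\unit_C)$ has characteristic $p$ and all $\ell^2$-th roots of unity over $\FF_p$ lie in $\FF_{p^n}$ by the minimality of $n$, the conclusion $\dim V \in \FF_{p^n}$ follows. The ``in particular'' clause is immediate, as the quantum dimension descends to a ring homomorphism $K_0(C) \to \End(\unit_C)$ whose image is forced into $\FF_{p^n}$.

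\textbf{Main obstacle.} The step I would not attempt to reprove from scratch is the precise eigenvalue accounting that promotes the exponent $\ell$ to $\ell^2$: this is where the squared braiding, rather than the braiding itself, governs quantum dimensions, and it is the substantive content of Etingof's argument. The $\ell$-versus-$\ell^2$ bookkeeping is the sort of computation that one really wants to read off of explicit ribbon identities rather than reconstruct by hand, so I would defer to \cite{etingof2021lectures} for it.
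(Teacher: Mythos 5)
The paper offers no proof of this theorem at all: it is quoted verbatim from \cite[Theorem 5.15]{etingof2021lectures}, with the footnote carrying the only original content (that the hypotheses and conclusion factor through the additive homotopy $1$-category). Your proposal does exactly the same thing — cite the reference and justify the transport to stable categories via the homotopy category — so it matches the paper's approach; the sketch of Etingof's argument is a bonus that the paper does not attempt, and you correctly defer its substantive eigenvalue bookkeeping to the source.
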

	In particular, for trace-zero symmetric monoidal categories, the dimension homomorphism $K_0(C) \to \End(\unit_C)$ must land inside $\FF_p$ (see \cite[Lemma 3.13]{etingof2021lectures}), so $K_0(C)$ cannot contain any field extension of $\FF_p$. 
	
	Despite the above obstruction, there are a number of possible directions for improvement to our theorems. Notably the categories of \Cref{thm:functorial} are not idempotent-complete, so one could ask whether it is possible to build idempotent-complete categories. Another possible improvement to \Cref{thm:functorial} would be an extension to all commutative rings, or at least the removal of the assumption about the minimal primes being characteristic zero. The method of proof of \Cref{thm:functorial} works without the characteristic zero assumption given a positive solution to the following conjecture:

	\begin{cnj}\label{conj:fpbar}
	For each prime $p$, there exists a rigid symmetric monoidal $\ZZ$-linear category $C$ with $K_0(C) \cong \overline{\FF}_p$.
	\end{cnj}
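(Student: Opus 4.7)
The plan is to emulate the structure of $C^{\infty}_{\overline{\QQ}}$ but in positive characteristic. The characteristic zero construction assembles an ultraproduct of stable module categories $\mathrm{StMod}(\FF_\ell[C_\ell])$ along a nonprincipal ultrafilter on primes, which is intrinsically of characteristic zero since $\prod_{\mathcal{U}} \FF_\ell$ is. Staying in characteristic $p$ therefore requires a genuinely different input: a compatible family of rigid symmetric monoidal $\ZZ$-linear stable categories $\{C_n\}_{n\ge 1}$ with $K_0(C_n) \cong \FF_{p^n}$ and transition functors realizing the field inclusions $\FF_{p^n}\hookrightarrow \FF_{p^{nm}}$. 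Given such a family, the desired $C$ is built as the filtered colimit $\operatorname{colim}_n C_n$ in $\ZCat^{\infty}_{\rig}$, and since $K_0$ preserves filtered colimits one obtains $K_0(C)\cong \bigcup_n \FF_{p^n} = \overline{\FF}_p$.

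The case $n=1$ is already available as $\mathrm{StMod}(\FF_p[C_p])$. The harder task is to produce $C_n$ for $n>1$. A natural starting point is to replace $\FF_p[C_p]$ by a larger cocommutative Hopf algebra, or to combine it with a Galois twist by $\mathrm{Gal}(\FF_{p^n}/\FF_p)$ so that the dimension homomorphism outputs $\FF_{p^n}$ rather than an $\FF_p$-direct summand. Recent constructions of higher Verlinde categories $\mathrm{Ver}_{p^n}$ furnish a natural pool of rigid symmetric monoidal stable categories in characteristic $p$ with rich arithmetic content, and one could try to locate a variant, perhaps by passing to a stable module analogue, whose Grothendieck ring surjects onto $\FF_{p^n}$.

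The main obstacle is \Cref{thm:obstruction}: if $C_n$ were trace-zero its dimension homomorphism would factor through $\FF_p$ and could not see $\FF_{p^n}$, so the desired category must genuinely have nilpotent endomorphisms of nonzero trace. This is precisely the property that the semisimplified quantum $\mathfrak{sl}_2$ tilting module input used for \Cref{thm:functorialmon} sacrifices once one imposes a symmetric braiding, so the symmetric monoidal setting cannot merely borrow from the braided construction. Identifying a genuinely non-trace-zero symmetric input, compatible with rigidity and $\ZZ$-linearity, is the crux of the conjecture. Once a suitable family $\{C_n\}$ is in hand, verifying that the filtered colimit preserves rigidity, symmetry, and $K_0$ follows the pattern already established in the proof of \Cref{thm:functorial}.
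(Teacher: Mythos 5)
The statement you are addressing is \Cref{conj:fpbar}, which the paper states as an open conjecture and does not prove; there is no proof in the paper to compare against. Your proposal does not close the gap either. The reduction you describe --- build compatible rigid symmetric monoidal $\ZZ$-linear categories $C_n$ with $K_0(C_n)\cong \FF_{p^n}$ and transition functors realizing the field inclusions, then take a filtered colimit and use that $K_0$ preserves filtered colimits --- is sound as far as it goes, but it merely relocates the entire content of the conjecture into the unconstructed family $\{C_n\}_{n\ge 2}$. Already the case $n=2$ is open, and your candidate sources (higher Verlinde categories, Galois twists of $\StMod^{\omega}_{C_p}$, a ``stable module analogue'' with Grothendieck ring surjecting onto $\FF_{p^n}$) are named but not constructed or verified; in particular you give no mechanism by which any of them evades the trace-zero obstruction rather than merely needing to.

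That said, your diagnosis of the difficulty is accurate and matches the paper's own discussion: by \Cref{thm:obstruction} (and the remark following it), a trace-zero symmetric monoidal category has dimension homomorphism landing in $\FF_p$, so its $K_0$ cannot contain $\FF_{p^n}$ for $n>1$; and the quantum $\msl_2$ tilting-module input of \Cref{prop:sl2ribbon}, which supplies the large residue fields in the braided case (\Cref{cor:charp}), produces only braided, not symmetric, categories. So you have correctly explained why the conjecture is hard and why the paper's Theorem B machinery does not symmetrize, but you have not supplied the missing non-trace-zero symmetric input, which is the whole problem. This should be presented as a reduction and an analysis of obstructions, not as a proof.
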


	By \Cref{thm:obstruction}, any positive solution to \Cref{conj:fpbar} must not be trace-zero.

		\subsection*{Acknowledgements}
	I am very grateful to Nitu Kitchloo for introducing me to \Cref{prob2}, and also to Mikhail Khovanov for posing the problem. I am also very grateful to Pavel Etingof for pointing out to me that $\msl_2$-tilting modules in the mixed case could be used to give ribbon categorifications of finite fields. I would like to thank Pavel Etingof, David Gepner, Mikhail Khovanov, Nitu Kitchloo, and Vijay Srinivasan for discussions related to this problem. Finally, I would like to thank Pavel Etingof, Mikhail Khovanov, and Nitu Kitchloo for helpful feedback on earlier drafts.
	
\section{The examples}
	In this section we exhibit symmetric monoidal categories whose $K_0$ is an arbitrary domain of characteristic zero. Combining this with categories built from $\msl_2$-tilting modules in the mixed case, we produce ribbon categories whose $K_0$ is an arbitrary reduced ring.
	
	 We first introduce some categories and operations that we use. Consider the cate\-go\-r\-y $\StMod^{\omega}_{C_p}$\-\footnote{The category $\StMod^{\omega}_{C_p}$ can also be described as the category of perfect modules over the $\EE_{\infty}$-ring $\FF_p^{tC_p}$, the ring whose homotopy ring is the Tate cohomology of $\FF_p$ with a trivial $C_p$-action.}, the compact objects of the stable module category of the group ring $\FF_p[C_p]$ (see for example \cite[Section 2]{mathew2015torus} for an overview of this category). This is an idempotent-complete rigid symmetric monoidal $\ZZ$-linear category, and $K_0(\StMod^{\omega}_{C_p}) \cong \FF_p$.
	
	The construction in \Cref{thm:k0char0} below uses an ultraproduct of the categories $\StMod^{\omega}_{C_p}$ over the set $\PP$ of primes. We briefly recall how ultraproducts work, referring the reader to \cite[Section 3]{barthel2020chromatic} or \cite[Section 3.6]{etingof2021lectures} for more details. Given a family $C_{\alpha}, \alpha \in A$ of objects in a category $D$ ($D$ could be the category of categories), we can choose a non-principal ultrafilter $U$ on the set $A$, which is the data of a maximal proper filter in the poset of subsets of $A$ containing all cofinite subsets. Then an ultraproduct\footnote{The ultraproduct depends on the choice of ultrafilter, but we disregard this in our notation.}, which we denote $\Pi'_{\alpha \in A}C_{\alpha}$ is the object of $D$ defined as the filtered colimit along the subsets of $S$ in $U$ of the product $\Pi_{s \in S}C_{s}$. An ultrapower of an object $C$ is an ultraproduct where all of the $C_{\alpha}$ are the same object, $C$.
	
	\begin{lem}\label{thm:k0char0}
		$\Pi'_{p \in \PP} \StMod^{\omega}_{C_p}$ is an idempotent-complete rigid symmetric monoidal $\ZZ$-linear category with $K_0$ a field of characteristic zero. It is possible to choose an ultrafilter so that $K_0$ contains $\overline{\QQ}$.
	\end{lem}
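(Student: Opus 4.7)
My plan is to verify the structural claims by inheritance from products and filtered colimits, to identify $K_0$ with the ultraproduct of the fields $\FF_p$, and then to arrange the ultrafilter using the Chebotarev density theorem. Writing $\Pi'_p \StMod^{\omega}_{C_p}$ as the filtered colimit over $S \in U$ of $\prod_{p \in S}\StMod^{\omega}_{C_p}$, each of the properties stable, idempotent-complete, $\ZZ$-linear, and rigid symmetric monoidal is preserved by both small products and filtered colimits of small $\infty$-categories: duality is detected by a finite diagram of unit, counit, and triangle identities that is stable under both operations; idempotents split termwise in products and lift and split through filtered colimits of idempotent-complete categories; and linearity and stability are inherited similarly. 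Since each $\StMod^{\omega}_{C_p}$ has all of these properties, so does the ultraproduct.

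For the $K_0$ computation, I use that algebraic $K_0$ preserves filtered colimits of small stable $\infty$-categories, so
\[
K_0\!\left(\Pi'_p \StMod^{\omega}_{C_p}\right) \;=\; \varinjlim_{S \in U}\, K_0\!\left(\prod_{p\in S}\StMod^{\omega}_{C_p}\right).
\]
The canonical comparison map from the right-hand side into $\prod_{p\in S}\FF_p$ is surjective, and after passing to the colimit over $U$ it becomes injective as well: any tuple $(x_p)$ whose class is zero in $K_0(\StMod^{\omega}_{C_p})$ on some $T \in U$ is killed in the ultraproduct, because the finite chains of cofiber-sequence relations witnessing $[x_p]=0$ termwise can be transported on some refinement of $T$ inside $U$. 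This identifies $K_0$ with the ultraproduct $\Pi'_p \FF_p$, which by {\L}o\'s's theorem is a field, of characteristic zero because $\{p : q \ne 0 \text{ in } \FF_p\}$ is cofinite for every rational prime $q$.

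Finally, for each monic $f \in \ZZ[x]$ let $S_f$ be the set of primes $p$ such that $f$ splits into linear factors over $\FF_p$. By the Chebotarev density theorem, $S_f$ has positive natural density and so is infinite; moreover, primes that split completely in the compositum of the splitting fields of any finite list $f_1, \dots, f_n$ lie in $\bigcap_i S_{f_i}$, so this intersection is also infinite by another appeal to Chebotarev. Hence the family $\{S_f\}$ together with the cofinite filter has the finite intersection property and extends to a non-principal ultrafilter $U$; by {\L}o\'s applied to the sentence ``$f$ splits completely,'' every $f \in \QQ[x]$ then splits in $\Pi'_p \FF_p$, so the algebraic closure of $\QQ$ inside the ultraproduct is all of $\overline{\QQ}$. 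I expect the main obstacle to be the $K_0$ computation in the second step: $K_0$ does not commute with infinite products in general, so the descent to the filtered colimit over $U$ is essential for forcing injectivity of the comparison map.
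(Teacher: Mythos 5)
Your proposal is correct in substance and follows the same skeleton as the paper's (very terse) proof, but it differs in two places worth noting. First, for the statement that $K_0$ of the ultraproduct is $\Pi'_p\FF_p$, the paper simply asserts that $K_0$ preserves arbitrary products and filtered colimits of stable categories, hence ultraproducts. You are right to worry that commutation with infinite products is the delicate point, but your repair of it --- ``the finite chains of cofibre-sequence relations witnessing $[x_p]=0$ termwise can be transported on some refinement of $T$ inside $U$'' --- does not work as stated: the chains for different $p$ may have unbounded length, and an ultrafilter containing $T=\bigcup_n T_n$ (with $T_n$ the primes admitting a witness of length $\le n$) need not contain any $T_n$, so no refinement uniformizes them. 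The clean fix is Heller's criterion (\Cref{lem:k0crit} in this paper): a relation $[A]=[B]$ is always witnessed by exactly two cofibre sequences, and these termwise witnesses assemble into cofibre sequences in the product. This shows $K_0$ commutes with arbitrary products of stable categories on the nose, with no need to pass to the colimit for injectivity. Second, for the last sentence the paper cites Ax's theorem, whereas you reprove it directly via Chebotarev: the sets $S_f$ of primes where a monic $f\in\ZZ[x]$ splits completely have the finite intersection property with the cofinite filter (primes splitting completely in the compositum of splitting fields), so they extend to a non-principal ultrafilter, and {\L}o\'s then forces every such $f$ to split in $\Pi'_p\FF_p$, which therefore contains $\overline{\QQ}$. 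This is essentially the proof of the cited result of Ax, and it is a worthwhile unpacking; the citation just buys brevity. The structural claims (rigidity, $\ZZ$-linearity, idempotent-completeness under products and filtered colimits) are handled at the same level of detail as in the paper.
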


	\begin{proof}
		The functor $K_0$ preserves arbitrary products
		 and filtered colimits, so it preserves ultraproducts. Since $K_0(\StMod^{\omega}_{C_p})$ is $\FF_p$, the result follows, as an ultraproduct of fields of characteristic $p$ for different $p$ is one of characteristic $0$. It follows from \cite[Theorem 5]{ax1967solving} that one can choose an ultrafilter so that $K_0$ contains $\overline{\QQ}$.
	\end{proof}

\begin{rmk}\label{rmk:constructive}
	We chose a non-principal ultrafilter on an infinite set in the proof of \Cref{thm:k0char0} which is a non-constructive operation, but it is possible to do the proof in a constructive way, with the cost of the category not being idempotent-complete. Namely, one can simply choose a filter on the set of primes such that the corresponding colimit of products of prime fields still contains $\overline{\QQ}$ as in the proof of \cite[Theorem 5]{ax1967solving}. By then considering the subcategory of objects whose $K_0$ class lives in $\overline{\QQ}$, we obtain an explicit construction of a category with $K_0$ isomorphic to $\overline{\QQ}$.
\end{rmk}
%
%
%
\begin{dfn}
	Given a rigid braided monoidal category $C$, and an object $x \in C$, we can form the dual $x^{*}$, giving an equivalence $(-)^*:C \cong C^{\mathrm{op}}$. We refer to this as the \textit{dual functor}.
\end{dfn}

\begin{thm}\label{cor:subring}
	For any reduced commutative ring $R$ such that every minimal prime ideal of $R$ is characteristic $0$, there is a rigid symmetric monoidal $\ZZ$-linear category $C$ such that $K_0(C) \cong R$, and the dual functor is the identity on $K_0$.
\end{thm}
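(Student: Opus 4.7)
The plan is to first categorify a sufficiently large characteristic zero field $F$, then reduce the general case to producing a subcategory of a finite power of that categorification. By Lemma~\ref{thm:k0char0}, the ultraproduct $\Pi'_{p \in \PP}\StMod^{\omega}_{C_p}$ is a rigid symmetric monoidal $\ZZ$-linear category whose $K_0$ is a characteristic zero field. Passing to further ultrapowers gives rigid symmetric monoidal $\ZZ$-linear categories $D$ whose $K_0 = F$ is an arbitrarily large characteristic zero field. Since $K_0(\StMod^{\omega}_{C_p}) = \FF_p$ is generated by self-dual dimensions, the duality functor acts as the identity on $K_0$ in $\StMod^{\omega}_{C_p}$, and this property passes through products and filtered colimits, hence is inherited by $D$.

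Next I would reduce to $R$ finitely generated over $\ZZ$ by writing $R$ as the filtered colimit of its finitely generated subrings. Each such subring is reduced and, being Noetherian, has finitely many minimal primes, of characteristic zero (inherited via the embedding of $R$ into its total quotient ring). Using the filtered-colimit compatibility of $K_0$, it suffices to construct $C$ in the finitely generated case. For such $R$, let $\mathfrak{p}_1, \dots, \mathfrak{p}_n$ be the minimal primes and $K(\mathfrak{p}_i) = \mathrm{Frac}(R/\mathfrak{p}_i)$ the residue fields, each a finitely generated characteristic zero field. The canonical map $R \hookrightarrow \prod_{i=1}^n K(\mathfrak{p}_i)$ is injective; fixing embeddings $K(\mathfrak{p}_i) \hookrightarrow F$ into a sufficiently large $F$ realizes $R$ as a subring of $F^n = K_0(D^n)$.

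I would then define $C \subseteq D^n$ to be the full subcategory on objects $X$ with $[X] \in R$. Using that $R$ is a subring of $F^n$ and that the dual is trivial on $K_0$, this subcategory is closed under direct sums, shifts, cofibers, tensor products, and duals, so it inherits a rigid symmetric monoidal $\ZZ$-linear stable structure. The natural map $K_0(C) \to R$ is surjective, since any $r \in R$ is realized by some $X \in D^n$ (every class in the $K_0$ of a stable category is represented by an object, using shifts to subtract), and such $X$ lies in $C$ by construction.

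The main obstacle will be the injectivity of $K_0(C) \to R$: an equality $[X] = [Y]$ in $K_0(D^n)$ must be shown to be witnessed by cofiber sequences entirely within $C$. A witnessing chain in $D^n$ may a priori pass through intermediate objects whose classes lie outside $R$, and the key point is to refine such a chain---perhaps by direct-summing with well-chosen objects representing elements of $R$, or by exploiting the flexibility of the ultraproduct construction---so that every cofiber sequence stays inside $C$. This is the technical heart of the argument; once established, the assertion that the dual is the identity on $K_0(C) \cong R$ is automatic from the corresponding property of $D$.
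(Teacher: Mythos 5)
Your overall route is the same as the paper's: categorify a large characteristic zero field via \Cref{thm:k0char0} and ultrapowers, embed $R$ into a product of such fields using its minimal primes, and pass to the full subcategory of objects whose $K_0$-class lies in $R$. The one step you leave unresolved --- injectivity of $K_0(C)\to R$ for that full subcategory --- is in fact the only step requiring a real input, and it does not need to be invented: it is Thomason's classification of dense triangulated subcategories. The subcategory $C\subseteq D^n$ of objects with class in the subgroup $R\subseteq K_0(D^n)$ is dense (every $X\in D^n$ is a summand of $X\oplus\Sigma X\in C$, since $[X\oplus\Sigma X]=0$), and Thomason's theorem asserts precisely that for a dense subcategory of this form $K_0(C)\to K_0(D^n)$ is injective with image $R$; equivalently, any relation $[X]=[Y]$ with $X,Y\in C$ can, after stabilizing by objects of $C$, be witnessed by cofibre sequences lying entirely in $C$ (compare Heller's criterion, \Cref{lem:k0crit}). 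Without invoking this, your proof is incomplete at exactly the point you flag.

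A secondary issue: your reduction to finitely generated subrings is both unnecessary and not quite justified as stated. To conclude for general $R$ from the finitely generated case by a filtered colimit, you would need the categories $C_{R_\alpha}$ to assemble into a filtered diagram, which requires compatibility of all the choices (embeddings of residue fields, ultrafilters, etc.); making the construction functorial in $R$ is substantially harder and is the content of Theorem A, not of this theorem. The detour is avoidable: a reduced ring embeds directly into the product of the residue fields at \emph{all} of its minimal primes, and since $K_0$ commutes with arbitrary products of stable categories (as used in \Cref{thm:k0char0}), one can work with the possibly infinite product of categorified fields and apply the subring step once. Your treatment of the duality statement is fine and matches the paper's.
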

\begin{proof}
	We claim that the set of commutative rings $R$ satisfying the theorem is closed under products, ultraproducts, and subrings. The claim for products and ultraproducts follows since $K_0$ commutes with products and ultraproducts. For subrings, if $K_0(C) \cong R$, $R' \subset R$ is a subring, and the dual is the identity on $K_0(C)$, then the full subcategory of $C$ consisting of objects whose $K_0$ class is in $R'$ is a category showing that the result is true for $R'$.
	
	Any reduced ring embeds into the product of its localizations at all of its minimal primes. Indeed this is because the kernel of a ring mapping into all of its residue fields is exactly its nilradical, and the kernel of any residue field contains the kernel of some minimal prime ideal. In the case of a minimal prime, the localization at the prime is exactly the residue field associated to that prime.
	
	By assumption, each of these localizations is a field of characteristic zero, so we have thus reduced the theorem to the case $R$ is a characteristic zero field. An arbitrary field of characteristic zero embeds into an algebraically closed field, and ultrapowers of $\overline{\QQ}$ give examples of algebraically closed fields of arbitrarily large transcendence degree, so it suffices to produce a rigid symmetric monoidal $\ZZ$-linear category with the dual acting by the identity such that $K_0$ contains $\overline{\QQ}$. Now we apply \Cref{thm:k0char0}, and conclude by observing that the dual functor induces the identity on $K_0(\StMod^{\omega}_{C_p})$.
	
%
\end{proof}



We next explain an improvement of \Cref{cor:subring} in the setting of ribbon braided monoidal categories.
The key example of a ribbon braided monoidal category that we use is constructed in the following proposition:

\begin{prop}\label{prop:sl2ribbon}
	For every pair of distinct primes $p, \ell>2$, there is an idempotent-complete semisimple $\overline{\FF}_p$-linear ribbon category $C$ with $K_0(C)\cong \ZZ[\zeta_\ell+\zeta_\ell^{-1}]$.
\end{prop}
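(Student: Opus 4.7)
The plan is to realize $C$ as the semisimplification of the category of tilting modules for Lusztig's quantum group $U_q(\msl_2)$ over $\overline{\FF}_p$, specialized at a root of unity $q$ whose order is determined by $\ell$. Since $p \neq \ell$ and $p > 2$, suitable roots of unity exist in $\overline{\FF}_p$. Using Lusztig's divided-power integral form, one obtains a Hopf algebra over $\overline{\FF}_p$, whose finite-dimensional representations are naturally ribbon via the universal $R$-matrix and ribbon element. Inside this, consider the full subcategory $\mathrm{Tilt}_q$ of tilting modules, i.e.\ summands of tensor powers of the standard representation; this is an $\overline{\FF}_p$-linear Krull--Schmidt idempotent-complete rigid ribbon category, with indecomposables the Weyl modules $V_n$, $n \geq 0$.

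Next, I would form the semisimplification $C := \mathrm{Tilt}_q/\mathcal{N}$ by quotienting out the two-sided tensor ideal $\mathcal{N}$ of negligible morphisms, those $f : X \to Y$ such that $\mathrm{tr}(gf) = 0$ for every $g : Y \to X$. One checks that $\mathcal{N}$ is preserved by the braiding and twist, so $C$ inherits the structure of an $\overline{\FF}_p$-linear semisimple rigid ribbon category. In the mixed case (when the order of $q$ is coprime to $p$), the classical analysis of Andersen--Paradowski, Sawin, and others shows that $C$ has only finitely many simples, given by the images of the Weyl modules of nonzero quantum dimension, with fusion rules a truncated Clebsch--Gordan rule.

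To identify $K_0(C)$, I would express it as $\ZZ[x]/\Phi(x)$ where $x = [V_1]$ is the class of the standard representation and $\Phi$ encodes the fact that the first Weyl module of vanishing quantum dimension becomes zero. The relations $[V_1][V_n] = [V_{n-1}] + [V_{n+1}]$ together with this truncation realize $\Phi$ as a Chebyshev-type polynomial whose roots are sums $\zeta + \zeta^{-1}$ for appropriate roots of unity $\zeta$. Choosing the order of $q$ so that $\Phi$ becomes the minimal polynomial of $\zeta_\ell + \zeta_\ell^{-1}$ over $\ZZ$ gives $K_0(C) \cong \ZZ[\zeta_\ell + \zeta_\ell^{-1}]$.

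The main obstacle is the bookkeeping in the mixed-characteristic setting: one has to fix the order of $q$ so that the resulting Verlinde-type Grothendieck ring is precisely $\ZZ[\zeta_\ell + \zeta_\ell^{-1}]$ rather than a larger cyclotomic ring, and one must verify the semisimplification story (existence of the ribbon structure on $C$, identification of the simples, and the truncated fusion rules) goes through over $\overline{\FF}_p$. All of these ingredients are standard in the literature on quantum groups at roots of unity in positive characteristic; the novelty is the use of this classical construction as a categorification of $\ZZ[\zeta_\ell + \zeta_\ell^{-1}]$.
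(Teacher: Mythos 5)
Your construction is the same as the paper's up to the last step: take Lusztig's divided-power quantum $\msl_2$ over $\overline{\FF}_p$ at a root of unity $q$ of order prime to $p$, pass to tilting modules, semisimplify by killing negligibles, and identify the simples as the indecomposable tiltings of nonzero quantum dimension with truncated Clebsch--Gordan fusion. (Minor imprecision: the indecomposable tiltings $T(n)$ are not Weyl modules in general, only in the range surviving semisimplification.)

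The genuine gap is in the final identification of $K_0$. You propose to obtain $\ZZ[\zeta_\ell+\zeta_\ell^{-1}]$ by tuning the order of $q$ so that the fusion ring of the \emph{whole} semisimplified category is $\ZZ[x]/\Phi(x)$ with $\Phi$ the minimal polynomial of $\zeta_\ell+\zeta_\ell^{-1}$. No choice of order accomplishes this: if $q$ has odd order $m\geq 3$, the semisimplification has simples $T(0),\dots,T(m-2)$, and the top simple $T(m-2)$ is a simple current with $[T(m-2)]^2=[T(0)]$ and $[T(m-2)]\neq [T(0)]$, so $([T(m-2)]-1)([T(m-2)]+1)=0$ exhibits zero divisors in $K_0$. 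Hence the full fusion ring is never a domain, let alone $\ZZ[\zeta_\ell+\zeta_\ell^{-1}]$; equivalently, the Chebyshev-type polynomial $\Phi$ of degree $m-1$ factors over $\ZZ$ (its roots $\zeta_{2m}^k+\zeta_{2m}^{-k}$, $1\leq k\leq m-1$, fall into at least two Galois orbits). The fix, which is what the paper does, is to take $q$ of order exactly $\ell$ and then pass to the full \emph{even} subcategory generated by the $T(i)$ with $i$ even; this is closed under tensor product and duals, contains the unit, has $(\ell-1)/2$ simples, and its Grothendieck ring is the even part of the level-$(\ell-2)$ $\msl_2$ Verlinde ring, which is $\ZZ[\zeta_\ell+\zeta_\ell^{-1}]$ (as in \cite[Theorem 4.5]{incompressible}). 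With that replacement the rest of your argument goes through.
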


\begin{proof}
	Let $q$ be a primitive $\ell^{th}$ root of unity in $\overline{\FF}_p$. We consider the category $\Tilt^{\overline{\FF}_p,q}$ of tilting modules of Lusztig's divided power quantum group associated to $\msl_2$ over the field $\overline{\FF}_p$. We refer the reader to \cite{sl2tiltingmod} for a good reference for this category: it is an idempotent-complete $\overline{\FF}_p$-linear ribbon category with indecomposable objects $T(v)$ for $v\geq 0$, where $T(0)$ is the unit.
	
	Let $C'$ be the semisimplification of this category: see \cite{semisimplification} for an overview of this construction. $C'$ is then an idempotent-complete $\overline{\FF}_p$-linear semisimple ribbon category. We will show an appropriate subcategory $C \subset C'$ has $K_0$ isomorphic to $\ZZ[\zeta_{\ell}+\zeta_{\ell}^{-1}]$. The simple objects in the semisimplification $C'$ correspond to indecomposable objects in $\Tilt^{\overline{\FF}_p,q}$ with nonzero categorical dimension, which is exactly $T(i)$ for $0\leq i \leq \ell-2$ by \cite[Proposition 3.23]{sl2tiltingmod}. The tensor product by \cite[Lemma 4.1]{sl2tiltingmod} agrees with the truncated Clebsch--Gordon rule for the tensor product in the Verlinde category $\Ver_p$ \cite[Section 4.2]{etingof2021lectures}. 
	
	We let $C$ be the full subcategory of $C'$ generated by $T(i)$ for $i$ even: this is also an idempotent-complete $k$-linear semisimple ribbon category since the even objects are closed under tensor product, duals, and contain the unit. Then $K_0(C)$ then agrees with $K_0(\Ver_p^{+})$, which is indeed $\ZZ[\zeta_\ell+\zeta_\ell^{-1}]$ by \cite[Theorem 4.5]{incompressible}.
\end{proof}

\begin{remark}
	The key properties of the categories of \Cref{prop:sl2ribbon} that we use are that their mod $p$ reductions can contain arbitrarily large field extensions of $\FF_p$. If we were interested in constructing categories that are just monoidal as opposed to braided monoidal, there are other candidates, such as the categories constructed in \cite{laugwitz2018categorification}, whose $K_0$ is an arbitrary cyclotomic extension of the integers.
\end{remark}

The next goal is to realize the operation $K_0 \mapsto K_0/(p)$ at the level of categories. To do this, we use the tensor product of $\FF_p$-linear stable categories. If $C,D$ are $\FF_p$-linear stable categories, then $C\otimes_{\FF_p}D$ is also such a category that is generated as a stable category by objects $c\otimes d, c \in C, d \in D$, and $\map(c\otimes d,c'\otimes d') \cong \map(c,c')\otimes_{\FF_p}\map(d,d')$. In particular, $K_0(C\otimes_{\FF_p}D)$ has a surjective map from $K_0(C)\otimes K_0(D)$.

\begin{prop}\label{prop:catmodp}
	Let $C$ be an $\FF_p$-linear abelian category. Then if $D^{b}(C)$ is the bounded derived category of $C$, then $K_0(D^b(C)\otimes_{\FF_p} \StMod^{\omega}_{C_p}) \cong K_0(C)/(p)$.
\end{prop}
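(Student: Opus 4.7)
My plan is to realize $D^b(C)\otimes_{\FF_p}\StMod^{\omega}_{C_p}$ as a Verdier quotient and extract its $K_0$ from the resulting localization sequence, avoiding any delicate identification with a derived category of an abelian category of equivariant objects.

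The starting point is the defining cofiber sequence of small stable $\FF_p$-linear $\infty$-categories
\[
\Perf(\FF_p[C_p])\hookrightarrow D^b(\FF_p[C_p]\text{-mod})\to\StMod^{\omega}_{C_p},
\]
in which the first functor is the inclusion of perfect complexes into the bounded derived category of finite-dimensional $\FF_p[C_p]$-modules (on $K_0$ this is $\ZZ\xrightarrow{\cdot p}\ZZ\to\FF_p\to 0$, matching $K_0(\StMod^{\omega}_{C_p})\cong\FF_p$). Tensoring with $D^b(C)$ over $\FF_p$ is cocontinuous in each variable, so it preserves this cofiber sequence, and right exactness of $K_0$ then yields a right exact sequence
\[
K_0\bigl(D^b(C)\otimes_{\FF_p}\Perf(\FF_p[C_p])\bigr)\xrightarrow{\alpha}K_0\bigl(D^b(C)\otimes_{\FF_p}D^b(\FF_p[C_p]\text{-mod})\bigr)\xrightarrow{\beta}K_0\bigl(D^b(C)\otimes_{\FF_p}\StMod^{\omega}_{C_p}\bigr)\to 0.
\]

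I would then identify the middle group with $K_0(C)$. The trivial-action inclusion $t\colon\Perf(\FF_p)\hookrightarrow D^b(\FF_p[C_p]\text{-mod})$ and the forgetful functor $U\colon D^b(\FF_p[C_p]\text{-mod})\to\Perf(\FF_p)$ satisfy $U\circ t=\mathrm{id}$, and tensoring with $D^b(C)$ produces the analogous pair between $D^b(C)$ and the middle term, so $U_*$ is split epi and $t_*$ is split mono on $K_0$. For surjectivity of $t_*$, I use that the middle $K_0$ is generated by classes $[c\otimes V]$ with $c\in C$ and $V\in\FF_p[C_p]\text{-mod}$, and by $\ZZ$-bilinearity of the Grothendieck pairing together with $[V]=\dim_{\FF_p}(V)[\FF_p]$ in $K_0(\FF_p[C_p]\text{-mod})\cong\ZZ$ (the trivial module is the unique simple $\FF_p[C_p]$-module), each such class equals $\dim_{\FF_p}(V)\cdot[c\otimes\FF_p]$, visibly in the image of $t_*$.

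Under the resulting identification $K_0\bigl(D^b(C)\otimes_{\FF_p}D^b(\FF_p[C_p]\text{-mod})\bigr)\cong K_0(C)$, the map $\alpha$ sends the generator $[c\otimes\FF_p[C_p]]$ of its source to $U_*[c\otimes\FF_p[C_p]]=[c^{\oplus p}]=p[c]$. Hence the image of $\alpha$ is exactly $p\cdot K_0(C)\subset K_0(C)$, and the right-exact sequence above gives $K_0\bigl(D^b(C)\otimes_{\FF_p}\StMod^{\omega}_{C_p}\bigr)\cong K_0(C)/(p)$. The only place requiring genuine care is the collapse of the middle group, which rests on $\ZZ$-bilinearity of the Grothendieck pairing and on $K_0(\FF_p[C_p]\text{-mod})\cong\ZZ$; everything else is a formal consequence of cofiber sequences and tensor products of stable $\infty$-categories.
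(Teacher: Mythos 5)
Your proof is correct, and it follows the same skeleton as the paper's — both arguments run $K_0$ along the Verdier/localization sequence $\Mod^{\omega}(\FF_p[C_p]) \to \Rep^{\omega}_{\FF_p}(C_p) \to \StMod^{\omega}_{C_p}$ tensored with $D^b(C)$ — but you handle the two key sub-steps differently. For the identification of the middle term with $K_0(C)$, the paper invokes \Cref{lem:dev}, which rests on a heart-detection/d\'evissage theorem and produces an equivalence of the whole $K$-theory spectra; you instead observe that the trivial-action inclusion is split by the forgetful functor and that surjectivity on $K_0$ follows from generation by pure tensors plus bilinearity of the external product and $G_0(\FF_p[C_p])\cong\ZZ$. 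This is strictly more elementary and suffices because only $\pi_0$ is needed, though it buys less (no statement about higher or negative $K$-groups). For the first map, the paper separately establishes the surjection $K_0(C)/(p)\twoheadrightarrow K_0(C')$ by K\"unneth and then only needs the image to be \emph{contained} in $(p)$, which it gets from the filtration of $\FF_p[C_p]$ by powers of its maximal ideal; you compute the image \emph{exactly} as $pK_0(C)$ (via $[\FF_p[C_p]]=p[\FF_p]$ in $G_0$), which lets you read off both surjectivity and injectivity from the single right-exact sequence. Two small points to make explicit: the generation of $K_0$ of a tensor product by pure tensors is precisely the fact the paper records just before the proposition, and it is what your collapse of the middle term relies on; and the right-exactness of $K_0$ at the quotient term requires either working with the non-idempotent-completed Verdier quotient or verifying surjectivity of $K_0$ onto the completed quotient (the paper flags this with the clause ``if the quotient is surjective on $K_0$,'' which in both treatments is supplied by the pure-tensor generation).
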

\begin{proof}
	Let $C' = D^b(C)\otimes_{\FF_p} \StMod^{\omega}_{C_p}$.
	There is a surjective map from $$K_0(D^b(C)) \otimes  K_0(\StMod^{\omega}_{C_p}) \to K_0(C')$$ By the Gillet-Waldhausen theorem, $K_0(D^b(C)) \cong K_0(C)$, and since $K_0(\StMod^{\omega}_{C_p}) \cong \FF_p$, the above map is really a surjective map $K_0(C)/(p) \to K_0(C')$. It suffices to show that this map is injective.
	
	Let $\Rep^{\omega}_{\FF_p}(C_p)$ denote the bounded derived category of finite dimensional representations of the cyclic group $C_p$ over the field $\FF_p$, so that $\StMod^{\omega}_{C_p}$ is the quotient of $\Rep^{\omega}_{\FF_p}(C_p)$ by the full stable subcategory generated by the projective representation, which is equivalent to $\Mod^{\omega}(\FF_p[C_p])$.
	
	Tensoring with $D^b(C)$, and applying the connective $K$-theory functor (see \cite{BGT}), we get a sequence
	
	\begin{center}
		\begin{tikzcd}
			K(D^b(C)\otimes_{\FF_p}\Mod^{\omega}(\FF_p[C_p]))\ar[r] &K(D^b(C)\otimes_{\FF_p}\Rep^{\omega}_{\FF_p}(C_p)) \ar[r] &  K(C')
		\end{tikzcd}
	\end{center}
	
	This is a cofibre sequence since connective $K$-theory sends localization sequences of categories to cofibre sequences if the quotient is surjective on $K_0$. 

	Let $f: \Mod(\FF_p)^{\omega} \to \Rep_{\FF_p}^{\omega}(C_p)$ denote the functor giving an $\FF_p$-module the trivial $C_p$-action.
	It suffices to show that the natural map $$K_0(D^b(C)) \xrightarrow{K_0(D^b(C)\otimes f)} K_0(D^b(C)\otimes_{\FF_p}\Rep^{\omega}_{\FF_p}(C_p))$$ is an equivalence, and that the image of $K_0(D^b(C)\otimes_{\FF_p}\Mod^{\omega}(\FF_p[C_p]))$ under the second map is divisible by $p$, so that the kernel of the map $K_0(C)/(p) \to K_0(C')$ is contained in the ideal $(p)$ and hence is zero.
	
	The first claim follows from \Cref{lem:dev} below. For the second, we note that the functor $f$ has a retraction $g:\Rep^{\omega}_{\FF_p}(C_p) \to \Mod^{\omega}(\FF_p)$ given by taking the underlying $\FF_p$-module. $K_0(D^b(C)\otimes g)$ is an equivalence since it is an inverse of $K_0(D^b(C)\otimes f)$, which we have already seen is an isomorphism. Thus it suffices to show that the composite $$K_0(D^b(C)\otimes_{\FF_p}\Mod^{\omega}(\FF_p[C_p]) \to K_0(D^b(C)\otimes_{\FF_p}\Rep^{\omega}_{\FF_p}(C_p)) \xrightarrow{g} K_0(D^b(C)\otimes_{\FF_p}\Mod^{\omega}(\FF_p))$$ has image in the ideal $(p)$. This is because this functor has a filtration given by the filtration of $\FF_p[C_p]$ by the powers of the maximal ideal, whose associated graded is a direct sum of $p$ copies of the functor given by basechange along the map of rings $\FF_p[C_p] \to \FF_p$.
\end{proof}

\begin{lem}\label{lem:dev}
	Let $C$ be an $\FF_p$-linear category with bounded $t$-structure. Then the natural map $K(C) \xrightarrow{K(C\otimes f)} K(C\otimes_{\FF_p}\Rep_{\FF_p}^{\omega}(C_p))$ is an equivalence, and $K_{-1}$ of both categories vanish.
\end{lem}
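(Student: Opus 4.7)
The strategy is dévissage, combined with the Gillet--Waldhausen theorem of the heart and Schlichting's vanishing of $K_{-1}$ for bounded $t$-structures. The crucial structural input is that $\FF_p[C_p] \cong \FF_p[\epsilon]/\epsilon^p$ (with $\epsilon = x-1$) is a local Artinian ring with residue field $\FF_p$: its augmentation ideal $I$ satisfies $I^p = 0$, so the abelian category $\mathcal{A}$ of finite-dimensional $\FF_p[C_p]$-modules has the trivial representation as its unique simple object, and every object carries the length-$p$ filtration $M \supseteq IM \supseteq \cdots \supseteq I^p M = 0$ whose subquotients are trivial representations.

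First I would equip $C \otimes_{\FF_p} \Rep^\omega_{\FF_p}(C_p)$ with a bounded $t$-structure whose heart is the abelian category $\mathcal{A}(C^\heartsuit)$ of $\FF_p[C_p]$-module objects in $C^\heartsuit$ (equivalently, $C_p$-equivariant objects in $C^\heartsuit$). The $t$-structure is the one obtained by combining the given bounded $t$-structure on $C$ with the standard bounded $t$-structure on $\Rep^\omega_{\FF_p}(C_p) = D^b(\mathcal{A})$; its boundedness makes essential use of the finite Loewy length of $\mathcal{A}$, and the heart is identified with $\mathcal{A}(C^\heartsuit)$ by propagating the $I$-adic filtration into $C^\heartsuit$. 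I expect this step to be the main obstacle, since tensor products of stable $\infty$-categories with bounded $t$-structures do not automatically inherit one, and the Artinian structure on $\mathcal{A}$ must be used crucially.

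Then I would invoke Quillen's dévissage for the exact inclusion $C^\heartsuit \hookrightarrow \mathcal{A}(C^\heartsuit)$ sending $X$ to $X$ with trivial $C_p$-action: the analog of the $I$-adic filtration inside $\mathcal{A}(C^\heartsuit)$ exhibits any $M$ as a finite extension of objects of $C^\heartsuit$ (since $I^k M/I^{k+1}M$ is annihilated by $I$ and therefore has trivial $C_p$-action), so $K(C^\heartsuit) \simeq K(\mathcal{A}(C^\heartsuit))$. Combining this with Gillet--Waldhausen applied to both $C$ and $C \otimes_{\FF_p} \Rep^\omega_{\FF_p}(C_p)$ and tracing through the identifications yields the desired equivalence $K(C) \simeq K(C \otimes_{\FF_p} \Rep^\omega_{\FF_p}(C_p))$, implemented by $K(C \otimes f)$. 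Finally, the vanishing of $K_{-1}$ on both sides follows from Schlichting's theorem applied to the bounded $t$-structures just constructed (or its Antieau--Gepner--Heller refinement if idempotent-completeness is not in hand).
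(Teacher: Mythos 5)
Your overall architecture---dévissage along the $I$-adic filtration of $\FF_p[C_p]\cong\FF_p[\epsilon]/\epsilon^p$, using that this ring is local Artinian with the trivial module as its only simple---does identify the underlying reason the lemma is true, but it is not the paper's route, and as written it has a genuine gap exactly where you predict one: the construction of a bounded $t$-structure on $C\otimes_{\FF_p}\Rep^{\omega}_{\FF_p}(C_p)$ whose heart is the category of $C_p$-equivariant objects of $C^{\heart}$. Nothing in the hypotheses gives you $C\simeq D^b(C^{\heart})$, and the Lurie tensor product of small stable categories does not interact well with $t$-structures or with limit-type constructions such as passing to $C_p$-equivariant objects; the Künneth formula only computes mapping spectra between the generators $c\otimes d$, which is far from an identification of the target with the bounded derived category of a module category. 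Since every subsequent step of your plan (Quillen dévissage on the hearts, the theorem of the heart on both sides, Schlichting-type vanishing of $K_{-1}$) is predicated on this identification, the argument does not go through as proposed: step 1 is the crux, not a technical preliminary, and closing it would amount to reproving the theorem the paper actually invokes.

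The paper sidesteps the issue by applying a dévissage-type theorem for coconnective situations, \cite[Theorem 1.3]{kcoconn}, whose hypotheses are checkable without putting any $t$-structure on the target. One needs only that (i) the image of $C\otimes f$ generates $C\otimes_{\FF_p}\Rep^{\omega}_{\FF_p}(C_p)$, which holds because every finite-dimensional representation is an iterated extension of trivial ones (your $I$-adic filtration in disguise), and (ii) $C\otimes f$ is fully faithful on the heart, which follows from $\pi_*\map(C\otimes f(a),C\otimes f(b))\cong\pi_*\map(a,b)\otimes_{\FF_p}H^{-*}(C_p;\FF_p)$ together with the fact that $H^{-*}(C_p;\FF_p)$ is coconnective with $H^0\cong\FF_p$. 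That theorem also delivers the vanishing of $K_{-1}$ of both categories in one stroke. The clean fix for your write-up is to replace your step 1 by the verification of these two hypotheses and cite that result (or reprove its contents), rather than attempting to transport the $t$-structure across the tensor product.
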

\begin{proof}
	To prove the lemma, we will show that $C\otimes f$ satisfies the conditions of \cite[Theorem 1.3]{kcoconn}. The image of $f$ generates the target since each finite dimensional representation of $C_p$ over $\FF_p$ always has a nonzero fixed vector. It thus suffices to show that the functor is fully faithful on the heart. If $a,b\in C^{\heart}$, then $\pi_*\map(C\otimes f(a), C\otimes f(b)) \cong \pi_*\map(a,b)\otimes_{\FF_p}H^{-*}(C_p;\FF_p)$. Since $\map(a,b)$ is coconnective and $H^{-*}(C_p;\FF_p)$ is concentrated in nonpositive degrees with $H^0 \cong \FF_p$, it follows that $\pi_0\map(a,b) \cong \pi_0\map(C\otimes f(a),C\otimes f(b))$, i.e that the functor is fully faithful.
\end{proof}

Trace-zero categories are closed under the operations we use:

\begin{lem}\label{lem:trzeroquot}
	If $C$ is a rigid braided monoidal abelian category, then $C$ is trace-zero. Any rigid braided monoidal stable category with bounded $t$-structure is also trace-zero. Trace-zero stable categories are also closed under idempotent completion, products, and filtered colimits.
\end{lem}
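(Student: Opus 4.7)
The plan is to verify each of the four claims in turn, in each case leveraging the additivity of the categorical trace over short exact or cofibre sequences of dualizable objects, together with a reduction to a setting in which a nilpotent endomorphism can be replaced by the zero morphism.

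For the first assertion, let $f : c \to c$ be a nilpotent endomorphism with $f^n = 0$ in a spherical monoidal abelian category. Since sphericality entails rigidity, every object is dualizable. I would use the kernel filtration
$$0 = \ker f^0 \subset \ker f \subset \ker f^2 \subset \cdots \subset \ker f^n = c,$$
which $f$ preserves and on whose successive quotients $\ker f^{i+1}/\ker f^i$ the induced endomorphism vanishes. Additivity of the categorical trace over short exact sequences of dualizable objects then gives $\mathrm{tr}(f) = \sum_i \mathrm{tr}(0) = 0$.

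For the second assertion, given a nilpotent $f : c \to c$ on a dualizable $c$ in a stable monoidal category with bounded $t$-structure, I would induct on the $t$-amplitude of $c$. When $c$ is concentrated in a single degree it is (a shift of) an object of the heart, so the first assertion applied to the heart finishes the argument. Otherwise, the Postnikov cofibre sequence $\tau_{\geq k}c \to c \to \tau_{<k}c$ splits $c$ into two pieces of strictly smaller amplitude to which $f$ lifts functorially and remains nilpotent, and additivity of trace over cofibre sequences together with the inductive hypothesis yields the vanishing.

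For the closure properties: for idempotent completion, if $c$ is a retract of a dualizable $d$ in the original category with $i : c \to d$, $p : d \to c$, and $p \circ i = \mathrm{id}_c$, and $f : c \to c$ is nilpotent, then $i \circ f \circ p : d \to d$ satisfies $(ifp)^n = i \circ f^n \circ p = 0$ and is nilpotent, and the retract identity $\mathrm{tr}(f) = \mathrm{tr}(ifp)$ inherits vanishing. For products, the trace in a product factors componentwise and nilpotency of a tuple means componentwise nilpotency, so each factor trace is zero. For filtered colimits, the data of dualizability and of the equation $f^n \simeq 0$ are finite, hence represented at some finite stage of the colimit where trace-zero already holds; trace is preserved by the monoidal transition functors to the colimit.

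The main obstacle is the second assertion, since a priori the Postnikov pieces of a dualizable object need not themselves be dualizable in the ambient stable category. The plan is to keep all intermediate objects in the stable category (where they remain dualizable by closure of dualizability under cofibres of dualizables) and only invoke the first assertion in the base case, where $c$ is concentrated in a single degree and inherits the spherical monoidal structure from $C$.
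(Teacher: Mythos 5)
Your argument is correct and follows essentially the same route as the paper: the kernel filtration plus additivity of the trace for the abelian case, the Postnikov filtration reducing to the heart for the $t$-structure case, and the standard retract/componentwise/finite-stage arguments for the closure properties (which the paper omits as straightforward). One small correction to your final paragraph: the dualizability of $\tau_{\geq k}c$ and $\tau_{<k}c$ does not follow from closure of dualizables under cofibres (that would be circular, since neither piece is known to be dualizable to begin with), but it is automatic here because trace-zero is only defined for spherical monoidal categories, in which every object is dualizable.
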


\begin{proof}
	The fact that $C$ is trace-zero is well-known: given a nilpotent endomorphism $f:c \to c$, one can filter $c$ by the kernel of the powers of $f$ to find that the map $f$ is zero on the associated graded. Thus the trace of $f$ is zero since it is on the associated graded.
	
	To see that a rigid braided monoidal category with bounded $t$-structure is trace-zero, we use the fact that every map is canonically filtered by the Postnikov tower, with associated graded maps in shifts of the heart. In particular, using additivity of traces, if $f$ is a nilpotent endomorphism of $X$, it suffices to check for each $i$ that the map $\pi_{i}^{\heart}f:\pi_i^{\heart}X \to \pi_i^{\heart}X$ has trace zero. Each $\pi_i^{\heart}f$ is nilpotent since $f$ is.
	
	The heart of the $t$-structure is an abelian category, and so the proof of the previous case applies to $\pi_i^{\heart}f$.\footnote{Note that this doesn't directly reduce to the previous case, since the dual of an object in the heart may not be in the heart.}
	 
	 We omit the proof that trace-zero categories are closed under idempotent completion, products, and filtered colimits, as it is straightforward.
\end{proof}

We are now ready to produce ribbon categorifications of all reduced rings.

\begin{thm}\label{cor:charp}
	For any reduced ring $R$, there exists a trace-zero ribbon braided monoidal $\ZZ$-linear stable category $C$ with $K_0(C)\cong R$ such that the dual is the identity on $K_0$.
\end{thm}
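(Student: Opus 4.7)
The plan is to mirror the structure of \Cref{cor:subring}: show that the class of reduced rings admitting a trace-zero ribbon braided monoidal $\ZZ$-linear stable categorification (with dual the identity on $K_0$) is closed under products, ultraproducts, and subrings, and then reduce to the case of fields. For products and ultraproducts, the ribbon braided structure is preserved componentwise, $K_0$ commutes with products and filtered colimits, and trace-zero is preserved by \Cref{lem:trzeroquot}. For a subring $R' \subset R$, one takes the full stable subcategory generated by objects whose $K_0$ class lies in $R'$; this inherits all the required structure.

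Any reduced ring embeds into the product of its localizations at minimal primes, each of which is a field, so it suffices to categorify an arbitrary field $k$. If $\mathrm{char}\,k = 0$, then $k$ embeds into an ultrapower of $\overline{\QQ}$, and \Cref{thm:k0char0} together with the closure properties gives such a categorification, symmetric monoidal hence ribbon braided with trivial twist. If $\mathrm{char}\,k = p > 0$, then $k$ embeds into an ultrapower of $\overline{\FF}_p$, so it suffices to categorify $\overline{\FF}_p$. For each prime $\ell > 2$ distinct from $p$, \Cref{prop:sl2ribbon} provides a semisimple $\overline{\FF}_p$-linear ribbon category $C_{p,\ell}$ with $K_0(C_{p,\ell}) \cong \ZZ[\zeta_\ell+\zeta_\ell^{-1}]$; viewing $C_{p,\ell}$ as an $\FF_p$-linear abelian category and applying \Cref{prop:catmodp}, the category $D^b(C_{p,\ell}) \otimes_{\FF_p} \StMod^{\omega}_{C_p}$ has $K_0 \cong \FF_p[\zeta_\ell+\zeta_\ell^{-1}]$ and inherits a ribbon braided monoidal structure by tensoring the ribbon structure on $C_{p,\ell}$ with the symmetric monoidal structure on $\StMod^{\omega}_{C_p}$. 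As $\ell$ varies, $\FF_p[\zeta_\ell+\zeta_\ell^{-1}]$ contains arbitrarily large finite subfields of $\overline{\FF}_p$, so a suitable ultraproduct over $\ell$ yields a category whose $K_0$ contains $\overline{\FF}_p$, and subring closure finishes this case.

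The main obstacle I expect is pinning down the trace-zero property, since the building block $\StMod^{\omega}_{C_p}$ is neither abelian nor bounded and thus does not obviously fall under either clause of \Cref{lem:trzeroquot}. The plan for this is to descend the trace-zero property from the ambient bounded derived category $\Rep^{\omega}_{\FF_p}(C_p)$, which is trace-zero by the bounded-$t$-structure clause of \Cref{lem:trzeroquot}: the trace of a nilpotent endomorphism in the quotient $\StMod^{\omega}_{C_p}$ lifts to one in $\Rep^{\omega}_{\FF_p}(C_p)$ after choosing a compact representative, and similarly for $D^b(C_{p,\ell}) \otimes_{\FF_p} \StMod^{\omega}_{C_p}$ one works before quotienting the projective modules. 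The remaining checks, that the dual functor is the identity on $K_0$ on each building block and that this property passes through tensor products, products, ultraproducts, and the subring construction, are straightforward since the relevant $K_0$ classes are supported on self-dual generators in the $C_{p,\ell}$ and in $\StMod^{\omega}_{C_p}$.
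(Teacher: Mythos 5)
Your overall architecture matches the paper's: closure under subrings, products, and ultraproducts reduces the statement to fields, and the positive-characteristic building blocks are exactly $D^b(C_{p,\ell})\otimes_{\FF_p}\StMod^{\omega}_{C_p}$ with $C_{p,\ell}$ from \Cref{prop:sl2ribbon}, fed through \Cref{prop:catmodp}. (The paper reduces directly to finite fields rather than splitting by characteristic; that difference is cosmetic.) Two steps, however, have genuine gaps.

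First, the trace-zero descent does not work as you describe. A nilpotent endomorphism $f$ of an object of $\StMod^{\omega}_{C_p}$ (or of $D^b(C_{p,\ell})\otimes_{\FF_p}\StMod^{\omega}_{C_p}$) lifts to an endomorphism $\tilde f$ of a representative $\tilde x$ in the heart upstairs, but $f^n=0$ in the quotient only means that $\tilde f^n$ factors through a projective object; $\tilde f$ itself need not be nilpotent, so the bounded-$t$-structure clause of \Cref{lem:trzeroquot} does not apply to it, and "the trace lifts to one upstairs" proves nothing yet. The missing ingredient, which is how the paper argues, is that traces computed in the heart factor through its semisimplification, which kills every morphism factoring through the projective representation; hence $\tilde f$ becomes nilpotent in the semisimplification and its trace vanishes there. (One should also choose $\tilde x$ without projective summands so that the trace of $f$ agrees with that of $\tilde f$.)

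Second, ``arbitrarily large finite subfields'' is the wrong criterion: $\FF_{p^n}\subseteq\FF_{p^f}$ only when $n\mid f$, so to capture $\overline{\FF}_p$ (or even a single $\FF_{p^n}$) in your ultraproduct over $\ell$ you need, for every $n$, some $\ell>2$ such that $n$ divides the residue degree of $p$ in $\ZZ[\zeta_\ell+\zeta_\ell^{-1}]$; a family of fields $\FF_{p^{f_\ell}}$ with $f_\ell\to\infty$ need not contain any given $\FF_{p^n}$. This is the number-theoretic heart of the paper's proof and occupies roughly half of it: one reduces to prime powers $n=q^m$ (using tensor products of the categories $C_{p,\ell}$ to handle composite $n$) and then produces, by a primitive-prime-divisor style $\gcd$ computation, a prime $\ell$ dividing $p^{q^m}-1$ but not $p^{q^{m-1}}-1$. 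Your proposal asserts the conclusion without this argument, and as literally stated it would not suffice even if granted.
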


\begin{proof}
	As in \Cref{cor:subring}, The set of $R$ satisfying the theorem are closed under subrings and ultraproducts. The collection of reduced rings is generated under subrings and ultraproducts by the collection of finite fields $\FF_{p^n}$.
	
	Let $C$ be a tensor product (relative to $\overline{\FF}_p$) of categories coming from \Cref{prop:sl2ribbon}. Since these categories are semisimple over an algebraically closed field, $C$ is also semisimple, and the $K_0$ of this tensor product is the tensor product of the $K_0$ of each factor. By \Cref{prop:catmodp}, $D^b(C)\otimes \StMod^{\omega}_{C_p}$ is a ribbon braided monoidal $\ZZ$-linear stable category with $K_0$ a tensor product of the mod $p$ reductions of $\ZZ[\zeta_\ell+\zeta_{\ell}^{-1}]$ for various $\ell>2$. To see it is trace-zero, we first observe that since $C$ is semisimple, the underlying stable category of $D^b(C)\otimes \StMod^{\omega}_{C_p}$ is a product of copies of $\overline{\FF}_p\otimes_{\FF_p}\StMod^{\omega}_{C_p}$ corresponding to the simple objects of $C$. Now the category is trace-zero for the same reason that $\StMod^{\omega}_{C_p}$ is, which we now explain. 
	
	Given a nilpotent endomorphism $f$ of an object $x \in D^b(C)\otimes \StMod^{\omega}_{C_p}$, we can choose a lift $\tilde{f}$ of $f$ to an endomorphism of an object $\tilde{x}$ in the heart of $D^b(C)\otimes \Rep_{\FF_p}(C_p)$ containing no projective summand. Traces of endomorphisms factor through the semisimplification, which kills the projective representation. By assumption $\tilde{f}^n$ factors through some projective representation for sufficiently large $n$, so $\tilde{f}$ is nilpotent in the semisimplification, and thus has zero trace.
	
	Since $\FF_{p^n}$ is the tensor product of finite fields of the form $\FF_{p^{q^n}}$, it suffices to show that for each $p,q,n$ with $p,q$ prime, there is an $\ell>2$ such that $\FF_{p^{q^n}}$ is contained in the mod $p$ reduction of $\ZZ[\zeta_\ell]$. Indeed, to see that the claim for $\ZZ[\zeta_{\ell}]$ implies the one for $\ZZ[\zeta_\ell+\zeta_{\ell}^{-1}]$, the claim for $\ZZ[\zeta_\ell]$ is equivalent to the claim that $\FF_{p^{q^n}}$ is contained in all the characteristic $p$ residue fields of $\ZZ[\zeta_\ell]$. Since $\ZZ[\zeta_\ell+\zeta_{\ell}^{-1}] \to \ZZ[\zeta_{\ell}]$ is a degree $2$ extension, it follows that in $\ZZ[\zeta_\ell+\zeta_{\ell}^{-1}]$, the residue fields mod $p$ contain a subextension of index at most $2$, and so in particular contain $\FF_{p^{q^{n-1}}}$.
	
	To see the claim for $\ZZ[\zeta_\ell]$, we first observe that whenever there is a prime $\ell$ dividing $|\FF_{p^{q^n}}|^\times$ that doesn't divide $|\FF_{p^{q^{n-1}}}|^\times$, then $\ZZ[\zeta_{\ell}]/p$ contains $\FF_{p^{q^n}}$.
	
	We thus need to show that for infinitely many values of $n$, $|\FF_{p^{q^n}}|^\times = p^{q^n}-1$ contains primes not dividing $p^{q^{n-1}}-1$. This will be true if $\gcd(\frac{p^{q^n}-1}{p^{q^{n-1}}-1},p^{q^{n-1}}-1) = \gcd(q,p^{q^{n-1}}-1)$\footnote{This equality follows by writing $\frac{p^{q^n}-1}{p^{q^{n-1}}-1} = 1+ \dots + p^{q^{n-1}} + \dots + p^{(q-1)q^{n-1}}$ and using that $p^{q^{n-1}} \equiv 1$ mod $(p^{q^{n-1}} - 1)$.} is $1$, i.e if $p$ is not congruent to $1$ mod $q$.
	
	If $q|p-1$, then for large $n$, the $q$-adic valuation of $p^{q^{n-1}}-1$ is larger than $1$. Thus since $\gcd(\frac{p^{q^n}-1}{p^{q^{n-1}}-1},p^{q^{n-1}}-1) = q$ by the above equality, it follows that for large $n$, the $q$-adic valuation of $\frac{p^{q^n}-1}{p^{q^{n-1}}-1}$ is $1$. Thus $\frac{p^{q^n}-1}{p^{q^{n-1}}-1}$ must have a prime factor that is not a prime factor of $p^{q^{n-1}}-1$, allowing us to conclude.
\end{proof}

	The categories constructed in \Cref{cor:charp} and \Cref{cor:subring} are stable categories that do not arise from additive categories. Moreover, they are not idempotent-complete, and the dual functor is the identity. Therefore we ask:

	\begin{qst}\label{qst:add}
		Given $R$ a commutative ring, when does there exist an additive rigid symmetric monoidal category $C$ with $K_0(C)\cong R$?
	\end{qst}
	
	\begin{qst}\label{qst:ide}
		Given $R$ a commutative ring, when does there exist an idempotent-complete rigid symmetric monoidal stable category $C$ with $K_0(C)\cong R$?
	\end{qst}

	\begin{qst}\label{qst:dual}
		Given $R$ a commutative ring with an involution, when does there exist a rigid symmetric monoidal stable category $C$ such that $K_0(C)$ acted on by the dual functor is equivalent to $R$?
	\end{qst}
%

\section{Universal and functorial examples}

	Next, we show it is possible to produce categories whose $K_0$ admit maps from a ring $R$ equipped with universal witnesses of relations in $K_0$. We then combine this with the results of the previous theorem to prove \Cref{thm:functorial} and \Cref{thm:functorialmon}. Given a symmetric monoidal stable category $C$, let $\Alg_{\EE_n}(\Mod(C))$ denote the category of $\EE_n$-monoidal $C$-linear categories\footnote{By an $\EE_n$-monoidal $C$-linear category we mean an $\EE_n$-algebra object (see \cite[Definition 5.1.0.4]{HA} and \cite[Definition 2.1.3.1]{HA}) in the symmetric monoidal category of modules over $C$ in the category of categories, such that the action map preserves finite colimits in both variables.}. For a discrete commutative ring $R$, we use $\Alg_{\EE_n}(\Mod(R)^{\heart})$ to denote the category of (discrete) associative $R$-algebras for $n=1$ and commutative $R$-algebras for $n>1$. For a category $E$ and object $e \in E$, we use $E_{e/}$ to denote the slice category of objects in $E$ equipped with a map from $e$.

	\begin{prop}\label{thm:versal}
		Let $1 \leq n\leq \infty$, $C$ be a symmetric monoidal stable category, and $C'$ a $\EE_n$-monoidal $C$-linear category. There is a functor $$D_{(-)}:\Alg_{\EE_n}(\Mod(K_0(C))^{\heart})_{K_0(C')/} \to \Alg_{\EE_n}(\Mod(C))_{C'/}$$ and a natural transformation
		$$\eta_{(R)}:R \to K_0(D_{R})$$
		such that $\eta_{R}$ makes $D_{R}$ a versal\footnote{or weakly initial} object in $\Alg_{\EE_n}(\Mod(C))_{C'/}$ equipped with a map  $R \to K_0(D_{R})$ in $\Alg_{\EE_n}(\Mod(K_0(C))^{\heart})_{K_0(C')/}$. The functor $D_{(-)}$ functorially depends on $C,C'$, and a choice of representative of each $K_0$-class of $K_0(C')$, and preserves filtered colimits with respect to all parameters. We can moreover require that $D_{(-)}$ take values and have a versal property instead in the subcategory of rigid categories.
%
	\end{prop}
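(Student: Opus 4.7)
The plan is to mirror a presentation of $R$ as an $\EE_n$-algebra under $K_0(C')$ at the level of $\Alg_{\EE_n}(\Mod(C))_{C'/}$, assembling $D_R$ as an iterated pushout of two basic colimit-preserving operations: freely adjoining an object (to realize a generator) and adjoining a cofiber-sequence diagram of prescribed shape (to realize a relation).

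By the filtered-colimit preservation requirement it suffices to construct $D_R$ functorially when $R$ is finitely presented, and then left Kan extend. For such an $R$, fix a coequalizer presentation $F(T) \rightrightarrows F(S) \to R$ with $S,T$ finite. First form an intermediate category $E_S$ by starting with $C'$ and, for each $s \in S$, using the chosen representative in $C'$ when the image of $s$ in $R$ lies in $K_0(C')$, and otherwise freely adjoining a new object via the left adjoint to the forgetful functor from $\Alg_{\EE_n}(\Mod(C))_{C'/}$ to pointed sets of objects. This yields a natural morphism $F(S) \to K_0(E_S)$; since freely adjoining objects is a left adjoint, it preserves filtered colimits and is functorial in all parameters.

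Next, each $t \in T$ encodes a pair of elements in $F(S)$ that must be identified in $R$, which via $F(S) \to K_0(E_S)$ becomes a required equality $[A_t] = [B_t]$ in $K_0(E_S)$ for specified objects $A_t, B_t \in E_S$. To enforce this, I would adjoin by pushout auxiliary objects $M_t, N_t$ together with two cofiber sequences $A_t \to M_t \to N_t$ and $B_t \to M_t \to N_t$ sharing middle and right terms, forcing $[A_t] + [N_t] = [M_t] = [B_t] + [N_t]$ and hence $[A_t] = [B_t]$ in $K_0$. Each such adjunction is a pushout along the inclusion of a small walking diagram and therefore commutes with the colimits in play. Defining $D_R$ as the simultaneous pushout over all $t \in T$ produces a category whose $K_0$ receives a map from $R$ compatibly with $K_0(C') \to K_0(D_R)$, giving $\eta_R$. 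Functoriality in $R$, $C$, $C'$ and the chosen representatives, together with filtered-colimit preservation in each parameter, follow from the functoriality and colimit preservation of the basic operations used. To land in the rigid subcategory one post-composes with the rigid-envelope functor, which is the left adjoint to the forgetful functor from rigid $\EE_n$-monoidal $C$-linear categories and hence preserves these properties and transports the versal property.

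The main obstacle is verifying the versal property: given $D' \in \Alg_{\EE_n}(\Mod(C))_{C'/}$ equipped with $\alpha : R \to K_0(D')$, one must produce a map $D_R \to D'$ inducing $\alpha$. The free generators $X_s$ in $D_R$ can be mapped to any object in $D'$ whose class realizes $\alpha$ applied to the image of $s$, and such objects always exist because every class in $K_0(D')$ is represented by an object. For each relation $t$ one must then produce witness objects $\tilde{M}_t, \tilde{N}_t \in D'$ together with cofiber sequences $A_t \to \tilde{M}_t \to \tilde{N}_t$ and $B_t \to \tilde{M}_t \to \tilde{N}_t$ with common middle and right terms; the existence of such witnesses, granted only the abstract $K_0$-equality $[A_t] = [B_t]$ in $K_0(D')$, is the delicate technical point and is precisely what forces the construction to be versal rather than universal. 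Once these witnesses are chosen, the universal properties of the pushouts defining $D_R$ assemble them into the required map $D_R \to D'$.
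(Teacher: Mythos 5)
Your overall architecture matches the paper's: freely adjoin objects realizing generators, glue on small diagrams witnessing each relation, and map out of the result by choosing witnesses in the target. But there is a genuine gap at exactly the point you flag as ``the delicate technical point'': you never establish that the witnesses exist in an arbitrary target $D'$, and with the cell shape you chose they need not. You adjoin cofibre sequences $A_t \to M_t \to N_t$ and $B_t \to M_t \to N_t$ with common middle and right terms; for versality you would need that in any stable category, $[A]=[B]$ in $K_0$ implies the existence of such a pair $(M,N)$. The statement that is actually true is Heller's criterion (\Cref{lem:k0crit}), which produces cofibre sequences only after stabilizing by an auxiliary summand: $Y \to A\oplus Z \to J$ and $Y \to B\oplus Z \to J$. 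The summand $Z$ plays the same role as the $Y$ in the additive fact that $[A]=[B]$ iff $A\oplus Y\cong B\oplus Y$ for some $Y$, and it cannot be dropped. The paper therefore adjoins cells of exactly the stabilized shape (free objects $Y,Z$, maps $f:Y\to O\oplus Z$ and $g:Y\to Z$, and an identification of their cofibres), so that Heller's criterion supplies the required witnesses in any $E$ with $[F(O)]=0$. Your unstabilized shape does impose the right relation on $K_0(D_R)$, but the mapping-out step --- the entire content of versality --- does not go through.

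Two smaller problems. Your reduction to finitely presented $R$ followed by left Kan extension requires a functorial choice of finite presentations, which does not exist; the paper avoids this by using the canonical presentation (every element of $R$ is a generator, and every valid formal relation among elements of $R\coprod K_0(C')$ is imposed), which is strictly functorial in $R$, $C$, $C'$, and the chosen representatives, and makes filtered-colimit preservation immediate. Likewise, handling rigidity by post-composing with a putative ``rigid envelope'' left adjoint is asserted rather than justified; the paper instead carries out all the free adjunctions internally to the category of rigid $\EE_n$-monoidal $C$-linear categories, so that the universal properties used in the mapping-out step are the rigid ones.
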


\begin{proof}
	We will prove the proposition simultaneously with and without the rigid assumption, using (rigid) to indicate which operations must be done as rigid categories with the rigid assumption.
	
	For each class in $x \in K_0(C')$, let us fix a choice of representative of the $K_0$-class of $x$, which we call $O_x$. Let us fix $R \in \Alg_{\EE_n}(\Mod(K_0(C))^{\heart})_{K_0(C')/}$. We choose a presentation of $R$ as an associative $K_0(C)$-algebra under $C'$ as follows:
	Let each $r \in R$ itself be the set of generators. To write the set of relations consider all formal expression of the form $\sum_{i=1}^n\prod_{j=1}^{m_i} x_{ij}$, where $x_{ij}$ is an element of the disjoint union $R\coprod K_0(C')$. For each such data, we get a relation if the relation $\sum_{i=1}^n\prod_{j=1}^{m_i} x_{ij}=0 \in R$ holds where for elements of $K_0(C')$, we consider their image in $R$.
	
	Now we let $D'_R$ be the free (rigid) $\EE_n$-monoidal $C$-linear category under $C'$ equipped with objects $X_{r}$ for each generator $r \in K_0(C')$. These objects give a natural map $\eta_{R}':K_0(C')\{R\} \to K_0(D'_R)$ in $\Alg_{\EE_n}(\Mod(K_0(C))^{\heart})_{K_0(C')/}$.
	
	Next, for each relation $\sum_{i=1}^n\prod_{j=1}^{m_i} x_{ij}$, consider the object $O = \oplus_{i=1}^n(\otimes_{j=1}^{m_i} P_{ij})$ where $P_{ij}$ is $O_{x_{ij}}$ whenever $x_{ij} \in K_0(C')$ and is $X_{x_{ij}}$ whenever $x_{ij} \in R$. Note that we consider the operations $\oplus$ and $\otimes$ as only giving monoidal structures (as opposed to symmetric monoidal and $\EE_n$-monoidal) so that $O$ is an object defined up to a contractible space of choices from the data defining the relation.

	For each relation, we freely adjoin to $D'_R$ as a (rigid) $\EE_n$-monoidal $C$-linear category objects $Y,Z$, maps
	
	\begin{center}
		\begin{tikzcd}[row sep=small]
			Y \ar[r,"f"] & O\oplus Z &
			Y \ar[r,"g"] & Z
		\end{tikzcd}
	\end{center}
	and an isomorphism $\cof(f) \cong \cof(g)$. Let $D_R$ be the object in $\Alg_{\EE_n}(\Mod(C))_{C'/}$ constructed via these operations.
	
	The fact that $\cof(f) \cong \cof(g)$ implies that $[O] +[Z] - [Y] = [Z]-[Y]$, i.e $[O] = 0$. Thus the composite $K_0(C')\{R\} \xrightarrow{\eta_R'} K_0(D'_R) \to K_0(D_R)$ factors through $R$, so we obtain the natural transformation $\eta_R$ as this factorization.
	
	It remains to show the versal property of $D_R$. To do this, let $E$ be a (rigid) $\EE_n$-monoidal $C$-linear category under $C'$ with a factorization $K_0(C') \to R \to K_0(E)$. We must then produce a map $D_R \to E$ in $\Alg_{\EE_n}(\Mod(C))_{C'/}$ compatible with this factorization.
	
	First, observe that by choosing representatives of the $K_0$-classes of each element of $R$, we obtain a map $D'_R \to E$ by sending the object $X_r, r \in R$ to this object. It suffices to show then that this functor $F:D'_R \to E$ admits a factorization through $D_R$. To do this, we use the following lemma:
%
%
%
	
	\begin{lem}[Heller's criterion {\cite[Lemma 2.12]{kasprowski2019algebraic}}]\label{lem:k0crit}
		Let $C$ be a stable category. For any relation $[A]=[B]$ in $K_0(C)$, there exist cofibre sequences of the form
		\begin{center}
			\begin{tikzcd}[row sep=small]
				Y\ar[r,"f"] &A\oplus Z \ar[r] & J\\
				Y\ar[r,"g"] &B\oplus Z \ar[r] & J\\
			\end{tikzcd}
		\end{center}
	\end{lem}
	
	For a given relation $\sum_{i=1}^n\prod_{j=1}^{m_i} x_{ij}$, by applying \Cref{lem:k0crit} to $[F(O)] = [0]$ in $E$, we obtain the data as in the lemma. For each relation, by sending the $Y,Z,f,g$ used to form $D_R$ to these objects and maps, and by choosing an isomorphism between the $J$s appearing in the cofibre sequences, we obtain the desired factorization through $D_R$.	
%
\end{proof}

\begin{rmk}
	There is a version of \Cref{thm:versal} for additive categories: one need merely replace the application of \Cref{lem:k0crit} with the more basic fact that in an additive category, the relation $[A] = [B]$ holds iff there is some $Y$ such that $A \oplus Y \cong B\oplus Y$.
\end{rmk}

Now we combine \Cref{cor:subring}, \Cref{cor:charp}, and \Cref{thm:versal} to prove \Cref{thm:functorial} and \Cref{thm:functorialmon}, whose statements we recall for convenience.

	\begin{customthm}{A}
	There is a filtered colimit preserving functor $C^{\infty}_{(-)}:\Ring_{\red}^0 \to \ZCat^{\infty}_{\rig}$ and a natural isomorphism of rings $K_0(C^{\infty}_R) \cong R$.
\end{customthm}

\begin{customthm}{B}
	There is a filtered colimit preserving functor $C^2_{(-)}:\Ring_{\red} \to \ZCat^2_{\rig}$ and a natural isomorphism of rings $K_0(C^2_R) \cong R$.
\end{customthm}

\begin{proof}[Proof of \Cref{thm:functorial} and \Cref{thm:functorialmon}]
	We first prove \Cref{thm:functorial}, and then indicate the necessary changes to prove \Cref{thm:functorialmon}. Letting $C = C' = \Mod(\ZZ)^{\omega}, n=\infty$, choosing any representatives of $K_0(C')$, and applying \Cref{thm:versal}, we obtain a functor $D_{(-)}$, taking a discrete commutative ring $R$ to $D_R$, a $\ZZ$-linear rigid symmetric monoidal category with a natural map $R \to K_0(D_R)$. The category $D_R$ comes functorially equipped with objects $X_r, r \in R$ (see the proof of \Cref{thm:versal}) whose $K_0$-class is the image of $r$. 
	
	We consider the ideal $I_R$ of $K_0(D_R)$ generated by $r- r^*$, where $r^*$ is the class of the dual of $r$. We apply \Cref{thm:versal} again with $C = \Mod(\ZZ)^{\omega}, C' = D_R, n=\infty$ and $X_r$ as our choice of representatives to obtain a category $D^{'}_R$ versally equipped with a map $K_0(D_R)/I_R \to K_0(D^{'}_R)$. Note that the image of the composite $R \to K_0(D_R)/I_R \to K_0(D^{'}_R)$ by construction has the property that it is fixed by the dual functor, so the subcategory of $D^{'}_R$ consisting of objects whose $K_0$-class is in this image is a rigid symmetric monoidal subcategory, which we define to be $C_{R}^{\infty}$. 
	
	It remains to show that the natural surjection $R \to K_0(C_R^{\infty})$ is an isomorphism, which is equivalent to the claim that $R \to K_0(D_R^{'})$ is injective. We now assume that each minimal prime of $R$ is characteristic $0$. Let $C$ be a $\ZZ$-linear rigid symmetric monoidal category as in \Cref{cor:subring} with the property that $K_0(C) \cong R$ and the action of the dual functor on $K_0$ is trivial. The fact that $K_0(C)\cong R$ by versality gives the existence of a symmetric monoidal $\ZZ$-linear functor $D_R \to C$. Because the action of the dual functor on $C$ is trivial, versality of $D'_R$ further allows us to extend this to a symmetric monoidal functor $D'_R \to C$. It then follows that the composite $R \to K_0(D'_R) \to K_0(C)$ is an isomorphism, so $R \to K_0(D'_R)$ is injective as desired.
	
	To prove \Cref{thm:functorialmon}, we run the same proof, except changing $n$ from $\infty$ to $2$, and replacing the use of \Cref{cor:subring} with \Cref{cor:charp}.
\end{proof}

\begin{rmk}
	One can use \Cref{thm:versal} to make an `obstruction theory' for constructing \textit{idempotent-complete} categories with a specified $K_0$. We briefly indicate how this works for a symmetric monoidal categories.
	
	Given $C_0$ an idempotent-complete symmetric monoidal rigid stable category and a map $K_0(C_0) \to R$, we can by \Cref{thm:versal} construct a versal idempotent-complete rigid symmetric monoidal functor $C_0 \to C_1$ with a factorization $K_0(C_0) \to R \to K_0(C_1)$. The first obstruction to constructing an idempotent-complete $C_0$-linear category with $K_0$ isomorphic to $R$ is whether the map $R \to K_0(C_1)$ admits a retraction.
	
	If a retraction exists, we can choose a retraction, and form $C_2$ as a versal idempotent-complete $C_1$-linear category with a factorization $K_0(C_1) \to R \to K_0(C_2)$. The second obstruction is whether the map $R \to K_0(C_2)$ admits a retraction. 
	
	One can keep going, and if all obstructions vanish, one can produce for each $i$ a category $C_i$, and the filtered colimit of $C_i$ is an idempotent-complete $C_0$-linear category with the desired $K_0$. On the other hand, versality shows that if an idempotent-complete $C_0$-linear category exists with the desired $K_0$, then choices of retractions can be made so that all obstructions vanish.
\end{rmk}

\begin{rmk}
	Using \Cref{thm:versal} as in \Cref{thm:functorial}, one can reduce the problem of categorifying arbitrary commutative rings to the case of finite commutative rings. This is because the versal category $C_R$ with a surjective map $R \to K_0(C_R)$ preserves filtered colimits in the variable $R$, so to show the natural map $R \to K_0(C_R)$ is an isomorphism, one can assume $R$ is finitely presented. In this case, $R$ is a Noetherian Jacobson ring, so embeds into a product of finite rings (namely the quotients by powers of all maximal ideals), so it suffices to solve the problem for those. It is not clear if there is a `small' collection of finite rings that generate the rest under subrings, products and ultraproducts, but rings such as $\ZZ/p^n$ and $\FF_q[x_1\dots,x_n]/(x_i^{m_i}) $ can be categorified via methods similar to those presented in this paper, and seem to generate a lot of finite rings.
\end{rmk}

	\printbibliography

@book{HTT,
	title={Higher topos theory},
	author={Lurie, Jacob},
	year={2009},
	publisher={Princeton University Press}
}

@misc{kerodon,
	title        = {Kerodon},
	author       = {Jacob Lurie},
	howpublished = {\url{https://kerodon.net}},
	year         = {2018},
}

@article{kcoconn,
	title={On the {$K$}-theory of regular coconnective rings}, 
	author={Robert Burklund and Ishan Levy},
	year={2023},
	journal={Selecta Mathematica N.S}
}

@article{BGT,
	title={A universal characterization of higher algebraic {$K$}-theory},
	volume={17},
	ISSN={1465-3060},
	url={http://dx.doi.org/10.2140/gt.2013.17.733},
	DOI={10.2140/gt.2013.17.733},
	number={2},
	journal={Geom. Topol.},
	publisher={Mathematical Sciences Publishers},
	author={Blumberg, Andrew J and Gepner, David and Tabuada, Gonçalo},
	year={2013},
	month={Apr},
	pages={733–838}
}

@article {HA,
	AUTHOR = {Lurie, Jacob},
	TITLE = {Higher {A}lgebra},
	NOTE = {\href{http://www.math.ias.edu/~lurie/}{available online}},
	YEAR = {2017},
}

@article{kasprowski2019algebraic,
	title={Algebraic {$K$}-theory of stable {$\infty$}-categories via binary complexes},
	author={Kasprowski, Daniel and Winges, Christoph},
	journal={Journal of Topology},
	volume={12},
	number={2},
	pages={442--462},
	year={2019},
	publisher={Wiley Online Library}
}

@article{khovanov2016linearization,
	title={Linearization and categorification},
	author={Khovanov, Mikhail},
	journal={Portugaliae Mathematica},
	volume={73},
	number={4},
	pages={319--335},
	year={2016}
}

@article{sl2tiltingmod,
	title={$\text {SL}_{2}$ tilting modules in the mixed case},
	volume={29},
	ISSN={1420-9020},
	url={http://dx.doi.org/10.1007/s00029-023-00835-0},
	DOI={10.1007/s00029-023-00835-0},
	number={3},
	journal={Selecta Mathematica},
	publisher={Springer Science and Business Media LLC},
	author={Sutton, Louise and Tubbenhauer, Daniel and Wedrich, Paul and Zhu, Jieru},
	year={2023},
	month=may }

@article{khovanov2019categorify,
	title={How to categorify the ring of integers localized at two},
	author={Khovanov, Mikhail and Tian, Yin},
	journal={Quantum Topology},
	volume={10},
	number={4},
	pages={723--775},
	year={2019}
}

@article{mathew2015torus,
	title={Torus actions on stable module categories, Picard groups, and localizing subcategories},
	author={Mathew, Akhil},
	journal={arXiv preprint arXiv:1512.01716},
	year={2015}
}

@article{barwick2019categorifying,
	title={Categorifying rationalization},
	author={Barwick, Clark and Glasman, Saul and Hoyois, Marc and Nardin, Denis and Shah, Jay},
	journal={Forum of Mathematics, {Sigma}},
	volume={7},
	year={2019},
	organization={Cambridge University Press}
}

@article{khovanov2000categorification,
	title={A categorification of the {J}ones polynomial},
	author={Khovanov, Mikhail},
	journal={Duke Mathematical Journal},
	volume={101},
	number={3},
	pages={359--426},
	year={2000},
	publisher={Duke University Press}
}

@article{barthel2020chromatic,
	title={Chromatic homotopy theory is asymptotically algebraic},
	author={Barthel, Tobias and Schlank, Tomer M and Stapleton, Nathaniel},
	journal={Inventiones mathematicae},
	volume={220},
	number={3},
	pages={737--845},
	year={2020},
	publisher={Springer}
}

@article{witten1989quantum,
	title={Quantum field theory and the Jones polynomial},
	author={Witten, Edward},
	journal={Communications in Mathematical Physics},
	volume={121},
	number={3},
	pages={351--399},
	year={1989},
	publisher={Springer}
}

@article{reshetikhin1991invariants,
	title={Invariants of 3-manifolds via link polynomials and quantum groups},
	author={Reshetikhin, Nicolai and Turaev, Vladimir G},
	journal={Inventiones mathematicae},
	volume={103},
	number={1},
	pages={547--597},
	year={1991}
}

@inproceedings{semisimplification,
	title={On semisimplification of tensor categories},
	author={Etingof, Pavel and Ostrik, Victor},
	booktitle={Representation Theory and Algebraic Geometry: A Conference Celebrating the Birthdays of Sasha Beilinson and Victor Ginzburg},
	pages={3--35},
	year={2021},
	organization={Springer}
}

@article{incompressible,
	title={New incompressible symmetric tensor categories in positive characteristic},
	author={Benson, Dave and Etingof, Pavel and Ostrik, Victor},
	journal={Duke Mathematical Journal},
	volume={172},
	number={1},
	pages={105--200},
	year={2023},
	publisher={Duke University Press}
}

@article{ax1967solving,
	TITLE={Solving diophantine problems modulo every prime},
	author={Ax, James},
	journal={Annals of Mathematics},
	pages={161--183},
	year={1967},
	publisher={JSTOR}
}

@article{laugwitz2018categorification,
	title={A categorification of cyclotomic rings.},
	author={Laugwitz, Robert and Qi, You},
	journal={Quantum Topology},
	volume={13},
	number={3},
	year={2022}
}

@article{ozsvath2004holomorphic,
	title={Holomorphic disks and knot invariants},
	author={Ozsv{\'a}th, Peter and Szab{\'o}, Zolt{\'a}n},
	journal={Advances in Mathematics},
	volume={186},
	number={1},
	pages={58--116},
	year={2004},
	publisher={Elsevier}
}

@article{cohn2013differential,
	title={Differential graded categories are k-linear stable infinity categories},
	author={Cohn, Lee},
	journal={arXiv preprint arXiv:1308.2587},
	year={2013}
}

@inbook{etingof2021lectures,
	author = {Etingof, Pavel and Kannan, Arun},
	year = {2023},
	month = {11},
	pages = {191-234},
	title = {Lectures on symmetric tensor categories},
	isbn = {978-3-98547-054-9},
	doi = {10.4171/ecr/19/7}
}
	\Addresses
\end{document}